\documentclass[10pt]{amsart}
\usepackage[T1]{fontenc}
\usepackage[english]{babel}
\usepackage{graphicx}                         
\usepackage{amsmath}                    
\usepackage{amssymb} 
\usepackage{mathrsfs}
\usepackage{tikz}

\usepackage{tikz-cd}

\usepackage[margin=1.3in]{geometry}
\parindent=0pt
\parskip=8pt
\usepackage{color}
\usepackage{hyperref}
\usepackage[noabbrev,capitalize]{cleveref}
\hypersetup{
	colorlinks,
	linkcolor={red!50!black},
	citecolor={blue!50!black},
	urlcolor={blue!80!black}
}
\usepackage{palatino}

\newtheorem{thm}{Theorem}

\DeclareMathOperator{\Hom}{Hom}
\title{A closer look at Kadeishvili's theorem}

\author{Dan Petersen}
\address{Matematiska institutionen \\ Stockholms Universitet\\ 106 91 Stockholm \\ Sweden}
\email{dan.petersen@math.su.se}

\keywords{homotopy transfer theorem, Koszul duality, homological perturbation theory, operads}
\begin{document}

\maketitle

\begin{abstract}We give a proof of the Homotopy Transfer Theorem following Kadeishvili's original strategy. Although Kadeishvili originally restricted himself to transferring a dg algebra structure to an $A_\infty$-structure on homology, we will see that a small modification of his argument proves the general case of transferring any kind of $\infty$-algebra structure along a quasi-isomorphism, under weaker hypotheses than existing proofs of this result. 
\end{abstract}




\section{Introduction}

In 1980, Tornike Kadeishvili published the following celebrated theorem \cite{kadeishvili}:

\begin{thm}[Homotopy Transfer Theorem]
	Let $A$ be a dg algebra over a commutative ring $R$. Assume that the homology $H(A)$ is a projective $R$-module, so that there exists a quasi-isomorphism $f \colon H(A)\to A$. There exists noncanonically an $A_\infty$-algebra structure on $H(A)$ with vanishing differential, and an $A_\infty$-quasi-isomorphism $H(A) \to A$ whose arity $1$ term is given by $f$. 
\end{thm}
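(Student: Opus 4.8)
The plan is to construct the structure maps $m_n \colon H^{\otimes n} \to H$ of degree $2-n$ and the components $f_n \colon H^{\otimes n} \to A$ of degree $1-n$ of the putative $A_\infty$-morphism \emph{simultaneously}, by induction on the arity $n$, starting from $m_1 = 0$ and $f_1 = f$. Writing $H = H(A)$, the arity-$1$ morphism relation is precisely the assertion that $f$ is a chain map into the cycles, which holds. At each stage $n \geq 2$ I will have at my disposal $m_2, \dots, m_{n-1}$ and $f_1, \dots, f_{n-1}$ satisfying every $A_\infty$-relation for $(H, m_\bullet)$ and every morphism relation for $(f_\bullet)$ of arity strictly less than $n$ (so that, in particular, $m_2$ is the associative product induced on homology), and the job is to manufacture $m_n$ and $f_n$ extending the data one step further.

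First I would isolate the unknowns in the arity-$n$ morphism relation. Because $A$ is an honest dg algebra, its only nonzero operations are the differential and the product, so the relation collapses to the shape $d f_n = f \circ m_n + U_n$, where $U_n \colon H^{\otimes n} \to A$ is an explicit expression built from the product of $A$ together with the already-constructed $f_i$ and $m_j$ with $i,j < n$; crucially it involves neither $f_n$ nor $m_n$. Viewing $U_n$ as a degree-$(2-n)$ element of the Hom-complex $\Hom(H^{\otimes n}, A)$, the task becomes: choose $m_n \in \Hom(H^{\otimes n}, H)$ and a primitive $f_n$ exhibiting $f \circ m_n + U_n$ as a boundary $d f_n$.

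The engine of the induction is the observation that $H^{\otimes n}$ is a projective $R$-module, since tensor powers of a projective module are projective. Hence $\Hom(H^{\otimes n}, -)$ is exact and preserves quasi-isomorphisms, so $\Hom(H^{\otimes n}, f) \colon \Hom(H^{\otimes n}, H) \to \Hom(H^{\otimes n}, A)$ is a quasi-isomorphism. As $H$ carries the zero differential, the source $\Hom(H^{\otimes n}, H)$ also has zero differential and thus equals its own homology; consequently $f_\ast$ is a bijection between $R$-linear maps $H^{\otimes n} \to H$ and homology classes of $\Hom(H^{\otimes n}, A)$. Granting that $U_n$ is a cycle (see below), I define $m_n$ to be the unique map with $f_\ast[m_n] = [-U_n]$. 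Since $f$ is a chain map, $f \circ m_n$ is then a cycle homologous to $-U_n$, so $f \circ m_n + U_n$ is a nullhomologous cycle and any primitive of it serves as $f_n$. This produces both unknowns at once and closes the induction for the morphism relations.

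Two bookkeeping points remain, and I expect the signs here to be the main obstacle, the conceptual content being entirely in the previous paragraph. First one must check that $U_n$ really is a cycle in $\Hom(H^{\otimes n}, A)$: this is the familiar ``the obstruction is closed'' computation, following from the lower-arity relations together with $d^2 = 0$ and associativity in $A$. Second one must verify that the maps $m_\bullet$ produced this way genuinely satisfy the $A_\infty$-relations up to arity $n$. Here I would transport the identity $b_A^2 = 0$ for $A$ across $f$: the arity-$n$ relation $R_n$ for $(H, m_\bullet)$ is an element of the zero-differential complex $\Hom(H^{\otimes n}, H)$, and the morphism relations just established force $f \circ R_n = 0$; since $\Hom(H^{\otimes n}, f)$ is injective on homology and $R_n$ sits in a complex with zero differential, this gives $R_n = 0$. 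Both points invoke projectivity of $H^{\otimes n}$ in exactly the same way, through the quasi-isomorphism property of $\Hom(H^{\otimes n}, f)$. Finally, it is worth stressing where the hypotheses enter: projectivity of $H$ is used only to supply the quasi-isomorphism $f$ and the exactness of $\Hom(H^{\otimes n}, -)$, and at no stage is a contraction of $A$ onto $H$ or a chosen homotopy required. This is precisely the sense in which the argument is more economical than the homological-perturbation proofs, which demand a full deformation-retract datum.
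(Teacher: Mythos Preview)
Your proposal is correct and follows essentially the same inductive strategy as the paper (which is Kadeishvili's original argument, recast in coalgebraic language). There is, however, one organizational point worth straightening out.

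You treat ``$U_n$ is a cycle'' (your step A) and ``$R_n = 0$'' (your step C) as two separate bookkeeping checks, the first coming before the construction of $m_n, f_n$ and the second after. But the direct computation you allude to in step A does \emph{not} yield $\partial U_n = 0$ from the lower-arity relations alone: what it yields is the identity
\[
\partial U_n \;=\; \pm\, f \circ R_n,
\]
where $R_n$ is the arity-$n$ Stasheff obstruction built from $m_2,\dots,m_{n-1}$. So $\partial U_n = 0$ is equivalent to $f\circ R_n = 0$, which is exactly what you prove in step C via injectivity of $\Hom(H^{\otimes n},f)$ on homology. In other words, steps A and C are two faces of the same observation, and step C does not actually require the freshly established arity-$n$ morphism relation; it follows already from the lower-arity data via the displayed identity (since $f\circ R_n$ is visibly a boundary) together with injectivity.

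The paper makes this link explicit: it first uses the identity $f\circ (\mu\circ\mu)_{(n)} = \partial(F\circ\mu-\nu\circ F)_{(n)}$ to conclude that the $A_\infty$-obstruction is a boundary (hence zero, since $\Hom(H^{\otimes n},H)$ has vanishing differential), and only \emph{then} deduces that the morphism obstruction is a cycle and solves for $m_n, f_n$. In your write-up you should merge steps A and C and place them before step B; otherwise the argument as stated has a mild circularity. The paper remarks that Kadeishvili himself presented these as two separate ``direct calculations,'' and its contribution is precisely to identify the single identity linking them.
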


A very large number of different proofs of the Homotopy Transfer Theorem have been given since, in various strengthened forms. Most of these arguments use either the Homological Perturbation Lemma (e.g.\ \cite{gugenheimlambe,gugenheimlambestasheff,huebschmann-kadeishvili,huebschmann-stasheff,manetti-relative}, and in the most general formulation, \cite{berglundhomologicalperturbation}), or a formulation in terms of sums over trees due to Kontsevich--Soibelman (see \cite{kontsevich-soibelman,merkulov-strong,vanderlaanthesis}, and \cite[Section 10.3]{lodayvallette} for a very general textbook treatment) but other completely different methods of proof are also possible \cite{bergermoerdijk,marklhomotopyalgebras,rogershomotopical,preliedeformationtheory}. 
In more general versions of this theorem one can allow $A$ itself to be an $A_\infty$-algebra to begin with, the map $H(A)\to A$ can be replaced by a quasi-isomorphism from some chain complex to $A$, and rather than $A_\infty$-algebras one may consider $\infty$-algebras over other operads. On the other hand all these other versions require stronger hypotheses on $R$ or the complexes involved. 

Each of the proofs mentioned above requires setting up some amount of general machinery. By contrast, Kadeishvili's argument is as direct as it could be: he writes down the infinite list of equations describing an $A_\infty$-structure on $H(A)$ and an $A_\infty$-morphism $H(A)\to A$, and argues inductively that each of the equations can be solved in turn. 
The goal of this note is to give a direct proof of the Homotopy Transfer Theorem following Kadeishvili's original approach. Somewhat surprisingly, it turns out that once Kadeishvili's argument has been written down in the right way it works for all sorts of $\infty$-algebras, and one can transfer the algebraic structure over a general quasi-isomorphism. In fact the resulting version of the Homotopy Transfer Theorem requires weaker assumptions than any statement that I am aware of in the literature. (Its drawback compared to arguments by sums over trees or homological perturbation theory is of course the nonconstructive nature: one does not get an explicit formula for the transferred structure.) That being said, I do not claim any great originality in this result. 

Before stating the theorem we will need some notation. Let $R$ be any commutative ring, and let $C$ be a conilpotent cooperad in graded $R$-modules satisfying $C(0)=0$ and $C(1) \cong R$. We denote the cofree conilpotent $C$-coalgebra cogenerated by a graded $R$-module $V$ by $C(V) = \bigoplus_{n\geq 1} (C(n)\otimes V^{\otimes n})^{\mathbb S_n}$. Any coderivation of $C(V)$ is uniquely determined by a linear map $C(V) \to V$, and by the \emph{arity $1$ term} of a coderivation we mean the component $V = C(1) \otimes V \to V$. Similarly a $C$-coalgebra morphism $C(V) \to C(W)$ is described by a linear map $C(V) \to W$, and by the arity $1$ term of such a morphism we mean the component $V \to W$. If $M$ and $N$ are dg $R$-modules, then we let $\underline \Hom_R(M,N)$ denote the ``internal Hom'' dg $R$-module of homomorphisms $M \to N$. 

\begin{thm}\label{thmA}
	Let $(V,d_V)$ and $(W,d_W)$ be dg $R$-modules, and $f \colon V \to W$ a chain map. Let $\nu$ be a square-zero coderivation  of $C(W)$ of degree $-1$ whose arity $1$ term equals the given differential $d_W$. Assume that $f$ induces a quasi-isomorphism $\underline \Hom_R(C(V),V) \to \underline{\Hom}_R(C(V),W)$. Then there exists noncanonically a square-zero coderivation $\mu$ of $C(V)$ whose  arity $1$ term is $d_V$, and a morphism of $C$-coalgebras $F \colon C(V) \to C(W)$ whose  arity $1$ term is $f$ and which is a chain map with respect to the differentials defined by $\mu$ and $\nu$. 
\end{thm}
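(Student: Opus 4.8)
The plan is to follow Kadeishvili's inductive strategy, constructing the corestrictions of $\mu$ and $F$ one arity at a time. Since $C(V)$ is cofree conilpotent, a coderivation is the same datum as its corestriction $C(V) \to V$, and a coalgebra morphism the same datum as its corestriction $C(V) \to W$; I write these as $\sum_{n \geq 1}\mu_n$ and $\sum_{n\geq 1}F_n$, with arity components $\mu_n \colon (C(n)\otimes V^{\otimes n})^{\mathbb S_n}\to V$ and $F_n \colon (C(n)\otimes V^{\otimes n})^{\mathbb S_n}\to W$. The components $\mu_1 = d_V$ and $F_1 = f$ are prescribed, and specifying the remaining $\mu_n, F_n$ independently reassembles into a well-defined $\mu$ and $F$ by cofreeness, so there is no convergence issue. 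I would first note that $d_V$ induces a differential $\partial_V(\psi) = d_V\circ\psi - (-1)^{|\psi|}\psi\circ\hat\mu_1$ on $\underline\Hom_R(C(V),V)$, where $\hat\mu_1$ is the coderivation extending $d_V$; as $\hat\mu_1$ preserves the arity grading, $\partial_V$ is block-diagonal for that grading, and likewise for the differential $\partial_W$ on $\underline\Hom_R(C(V),W)$. The hypothesis is exactly that $f_* = f\circ(-)$ is a quasi-isomorphism from the first complex to the second, and since it respects the arity grading it is a quasi-isomorphism in each arity separately.

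Next I would unwind the two equations. Squaring $\mu$ is governed by the commutator of coderivations, which transports to a graded Lie bracket on $\underline\Hom_R(C(V),V)$ that raises arity, strictly so for brackets of two terms of arity $\geq 2$; the arity-$n$ part of $\mu^2 = 0$ then reads
\[
\partial_V(\mu_n) = -P_n, \qquad P_n := \tfrac12\sum_{\substack{i+j=n+1\\ i,j\geq 2}}[\mu_i,\mu_j],
\]
so $P_n$ depends only on $\mu_2,\dots,\mu_{n-1}$. Equating the corestrictions of the two $F$-coderivations $F\mu$ and $\nu F$ (which determine them), the arity-$n$ part of $F\mu = \nu F$ takes the form
\[
f\circ\mu_n - \partial_W(F_n) = Q_n,
\]
where $Q_n$ depends only on $\mu_2,\dots,\mu_{n-1}$, on $F_1,\dots,F_{n-1}$, and on the given $\nu_2,\dots,\nu_n$. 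The base case $n=1$ holds since $d_V^2 = 0$ and $f$ is a chain map.

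The inductive step is the heart of the matter. Assuming both equations hold below arity $n$, I would establish two formal identities: that $P_n$ is a $\partial_V$-cocycle, and the Bianchi-type compatibility
\[
f_* P_n + \partial_W Q_n = 0,
\]
both by expanding the lower-arity relations together with $d_V^2 = 0$, $d_W^2 = 0$, the graded Jacobi identity, and $\nu^2 = 0$; morally the second identity is the arity-$n$ shadow of $F\mu^2 = \nu^2 F = 0$. Granting these, the argument closes symmetrically. The first identity makes $[P_n]$ a cohomology class, the Bianchi identity gives $f_*[P_n] = 0$, and \emph{injectivity} of $f_*$ on cohomology forces $[P_n]=0$, so I may choose $\mu_n$ with $\partial_V(\mu_n) = -P_n$. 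The Bianchi identity then shows $f\circ\mu_n - Q_n$ is a $\partial_W$-cocycle; using the residual freedom $\mu_n \mapsto \mu_n + z$ by a $\partial_V$-cocycle $z$ (which preserves the first equation) shifts its class by $f_*[z]$, and \emph{surjectivity} of $f_*$ on cohomology lets me choose $[z]$ to annihilate it. Then $f\circ\mu_n - Q_n$ is a $\partial_W$-coboundary, and any primitive provides $F_n$ solving the second equation.

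The main obstacle I anticipate is establishing the two formal identities — above all the Bianchi identity $f_* P_n + \partial_W Q_n = 0$ — with correct signs and correct bookkeeping of the cooperad (co)composition maps; this is where $\nu^2=0$ and the lower-arity relations must be combined carefully. Everything else is a clean two-sided use of the quasi-isomorphism: injectivity on cohomology removes the obstruction to the coderivation equation, while surjectivity supplies the slack needed to solve the morphism equation simultaneously. It is precisely this symmetric use of the full quasi-isomorphism, rather than any splitting or contraction of the complexes, that I expect to permit the weakened hypotheses.
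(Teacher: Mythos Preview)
Your strategy matches the paper's exactly: build $\mu$ and $F$ arity by arity, using injectivity of $f_*$ on homology to kill the obstruction to $\mu^2=0$, then surjectivity (together with the freedom to shift $\mu_n$ by a cocycle) to make the morphism equation solvable in the same arity.

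Where the paper is sharper is precisely at what you flag as your main obstacle. Rather than verifying $\partial_V P_n = 0$ and $f_*P_n + \partial_W Q_n = 0$ by direct bookkeeping of cooperad cocompositions and signs, the paper observes that $\mu\circ\mu$ and $F\circ\mu - \nu\circ F$ are themselves coderivations (the former because $\mu$ is odd), and that the tautologies
\[
\mu\circ(\mu\circ\mu) - (\mu\circ\mu)\circ\mu = 0, \qquad F\circ(\mu\circ\mu) = (F\circ\mu - \nu\circ F)\circ\mu + \nu\circ(F\circ\mu - \nu\circ F)
\]
hold on the nose. Since by induction both coderivations vanish in arities below $n$, their arity-$n$ corestrictions are computed using only $\mu_{(1)}=d_V$, $\nu_{(1)}=d_W$, $F_{(1)}=f$; taking the arity-$n$ term of each tautology then yields your two formal identities immediately, with no sign chase and no reference to the specific combinatorics of $C$. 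This device is exactly what lets Kadeishvili's argument go through uniformly for an arbitrary cooperad, and it replaces the computation you were bracing for.

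One small caveat: writing $P_n = \tfrac12\sum[\mu_i,\mu_j]$ is dangerous over a general ring $R$. You never need the Lie bracket; working directly with the coderivation $\mu\circ\mu$ and its arity-$n$ corestriction (as the paper does) avoids any division.
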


In particular, we get a Homotopy Transfer Theorem if we assume that $f$ is a quasi-isomorphism and that $C(V)$ is $K$-projective in the sense of Spaltenstein \cite{spaltenstein} (that is, the functor $\underline{\Hom}_R(C(V),-)$ preserves exactness; for example, bounded below complexes of projective modules are $K$-projective). For $R=\mathbf Z$, the complex $C(V)$ is $K$-projective e.g.\ if each $C(n)$ is a free abelian group, and $V$ is a bounded below chain complex of free abelian groups. Note that we do not need to assume e.g.\ that $C$ is $\Sigma$-cofibrant, or that $f$ has a quasi-inverse. For example, $V$ might be a projective resolution of $W$; the fact that one can transfer an $A_\infty$-structure to a projective resolution is a rather recent theorem of Burke \cite{burkeprojective}.

\cref{thmA} specializes to Kadeishvili's original result since the structure of an $A_\infty$-algebra on a graded $R$-module $V$ is equivalent to a square zero coderivation of the reduced tensor coalgebra of the suspension of $V$. More generally if $P$ is a Koszul operad and $C$ is its Koszul dual cooperad, then a square zero coderivation of $C(V)$ is the same thing as a $P_\infty$-algebra structure on $V$, and a $P_\infty$-morphism $V \to W$ is a morphism $C(V) \to C(W)$ which is a chain map with respect to the differentials defined by the coderivations. So we recover the Homotopy Transfer Theorem for $\infty$-algebras over a Koszul operad. For example, $L_\infty$-algebras and $C_\infty$-algebras are described as coderivations of cofree cocommutative coalgebras and cofree Lie coalgebras, respectively. See \cite[Chapter 10]{lodayvallette}. 

By minor modifications of the argument one also obtains a Homotopy Transfer Theorem in the ``other direction'', as well as proofs of uniqueness of the transferred structures. More precisely we have:

\begin{thm}\label{thmB}
	Let $(V,d_V)$ and $(W,d_W)$ be dg $R$-modules, and $f \colon V \to W$ a chain map. Let $\mu$ be a square-zero coderivation  of $C(V)$ of degree $-1$ whose arity $1$ term equals the given differential $d_V$. Assume that $f$ induces a quasi-isomorphism $\underline \Hom_R(C(W),W) \to \underline{\Hom}_R(C(V),W)$. Then there exists noncanonically a square-zero coderivation $\nu$ of $C(W)$ whose  arity $1$ term is $d_W$, and a morphism of $C$-coalgebras $F \colon C(V) \to C(W)$ whose  arity $1$ term is $f$ and which is a chain map with respect to the differentials defined by $\mu$ and $\nu$. 
	
\end{thm}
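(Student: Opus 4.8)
The plan is to follow the same inductive strategy used to prove \cref{thmA}, with the roles of the two sides exchanged: now it is $\mu$ on $C(V)$ that is given and $\nu$ on $C(W)$ that must be built, while the coalgebra morphism $F\colon C(V)\to C(W)$ still points from $V$ to $W$. Write $\nu$ and $F$ through the arity decompositions $\sum_{n\geq 1}\nu_n$ and $\sum_{n\geq 1}F_n$ of the linear maps $C(W)\to W$ and $C(V)\to W$ that determine them (and $\mu=\sum_{n\geq 1}\mu_n$ for the given structure), with $\nu_1=d_W$ and $F_1=f$. Let $f_*$ denote postcomposition with $f$, and let $C(f)\colon C(V)\to C(W)$ be the coalgebra morphism with arity $1$ term $f$ and vanishing higher terms, so that precomposition $C(f)^{*}$ is exactly the map appearing in the hypothesis. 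I will construct $\nu_n$ and $F_n$ by induction on $n$, the base case $n=1$ holding because $d_W^2=0$ and $f$ is a chain map.

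Suppose $\nu_{<n}$ and $F_{<n}$ have been found. Expanding $\nu^2=0$ and $F\mu=\nu F$ in arity $n$ and separating the terms containing the unknowns $\nu_n$ and $F_n$ isolates two equations
\begin{align}
\partial\nu_n &= Q_n, \\
C(f)^{*}(\nu_n) + \partial F_n &= P_n,
\end{align}
where $\partial$ denotes the internal differentials of $\underline{\Hom}_R(C(W),W)$ and $\underline{\Hom}_R(C(V),W)$ induced by $d_W$ and $d_V$, the element $Q_n\in\underline{\Hom}_R(C(W),W)$ is a quadratic expression in $\nu_{<n}$, and $P_n\in\underline{\Hom}_R(C(V),W)$ collects the remaining known terms built from $\mu$, $\nu_{<n}$ and $F_{<n}$ (in particular $P_n$ contains the term $f_*(\mu_n)$ coming from the given structure). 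The decisive structural point, exactly dual to the situation in \cref{thmA}, is that the unknown top operation $\nu_n$ enters the morphism equation \emph{only} through $C(f)^{*}$, which is precisely the map assumed to be a quasi-isomorphism; the signs have been suppressed here.

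The key lemma to verify is the consistency of the obstructions: that $Q_n$ is a $\partial$-cycle and that $C(f)^{*}(Q_n)=\partial(P_n)$, or equivalently that $(Q_n,P_n)$ is a cycle in the mapping cone of $C(f)^{*}$. This is where the real work lies, and I expect it to be the main obstacle: it is a bookkeeping argument using the cooperad coassociativity and the compatibilities of the convolution products on the two $\underline{\Hom}$-modules, the hypothesis $\mu^2=0$, and the lower-arity instances of both equations, and it demands care with the Koszul signs. Granting the lemma, the step closes as follows. Since $C(f)^{*}$ is a quasi-isomorphism it is injective on homology, and $C(f)^{*}[Q_n]=[\partial P_n]=0$ forces $[Q_n]=0$; hence $Q_n=\partial\nu_n^{0}$ for some $\nu_n^{0}$, solving the first equation. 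The element $P_n-C(f)^{*}(\nu_n^{0})$ is then a $\partial$-cycle, and since $C(f)^{*}$ is surjective on homology we may correct $\nu_n^{0}$ by a $\partial$-cycle $z\in\underline{\Hom}_R(C(W),W)$ with $C(f)^{*}[z]=[P_n-C(f)^{*}(\nu_n^{0})]$. Setting $\nu_n=\nu_n^{0}+z$ preserves $\partial\nu_n=Q_n$ and makes $P_n-C(f)^{*}(\nu_n)$ null-homologous, so it equals $\partial F_n$ for some $F_n$. This solves the second equation and completes the inductive step, and with it the construction of $\nu$ and $F$.
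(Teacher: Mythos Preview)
Your inductive strategy and the way you use the quasi-isomorphism hypothesis (injectivity on homology to kill $Q_n$, then surjectivity to adjust $\nu_n$ by a cycle so that $F_n$ exists) match the paper exactly. The gap is that you leave unproved precisely the step the paper is designed to make transparent: the consistency identities $\partial Q_n=0$ and $C(f)^{*}(Q_n)=\partial P_n$. You call this ``bookkeeping \ldots\ with care with the Koszul signs'' and then grant it; but the whole point of Kadeishvili's method as presented here is that no such bookkeeping is needed.

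The trick is not to isolate the unknowns $\nu_n$, $F_n$ at all. Instead, begin the induction with $\nu$ and $F$ already defined in every arity (the values in arity $\geq n$ arbitrary), satisfying the two equations on $C^{\leq(n-1)}$. Then $\nu\circ\nu$ is an honest coderivation of $C(W)$, and $F\circ\mu-\nu\circ F$ is an honest coderivation $C(V)\to C(W)$, both vanishing below arity $n$. Now one writes down the two tautologies
\[
\nu\circ(\nu\circ\nu)-(\nu\circ\nu)\circ\nu=0,\qquad (\nu\circ\nu)\circ F=(F\circ\mu-\nu\circ F)\circ\mu+\nu\circ(F\circ\mu-\nu\circ F),
\]
the second using only $\mu\circ\mu=0$. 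Taking arity $n$ terms, and using that both $\nu\circ\nu$ and $F\circ\mu-\nu\circ F$ vanish below arity $n$, the only surviving contributions are those in which these coderivations are pre- or postcomposed with the arity $1$ parts $d_W$, $d_V$, $f$. This yields immediately $\partial(\nu\circ\nu)_{(n)}=0$ and $(\nu\circ\nu)_{(n)}\circ C^n(f)=\partial(F\circ\mu-\nu\circ F)_{(n)}$, with no arity-by-arity sign chase. Your $Q_n$ and $P_n$ are, up to sign, these arity-$n$ obstructions for the initial choice of $\nu_n$, $F_n$; packaging them as components of genuine coderivations is what turns your ``key lemma'' from a computation into a two-line observation.
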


\begin{thm}\label{thmC}
	Keep the hypotheses and notation of \cref{thmA}. Let $\mu$ and $\mu'$ be square zero coderivations of $C(V)$ obtained as in \cref{thmA}, and let $F, F' : C(V) \to C(W)$ be the corresponding morphisms. There exists a noncanonical isomorphism of $C$-coalgebras $\Phi \colon C(V) \to C(V)$ whose  arity $1$ term is the identity and which is a chain map with respect to the differentials defined by $\mu$ and $\mu'$. Moreover, if the cooperad $C$ is nonsymmetric then there exists a coderivation homotopy $H$ between $F' \circ \Phi$ and $F$. 
\end{thm}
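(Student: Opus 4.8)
The plan is to prove Theorem C by the same inductive strategy that underlies Theorems A and B. The setup is this: we have two square-zero coderivations $\mu, \mu'$ on $C(V)$, both with arity $1$ term $d_V$, together with morphisms $F, F' \colon C(V) \to C(W)$ chain maps to $(C(W), \nu)$ whose arity $1$ terms are both $f$. We want to build an isomorphism $\Phi \colon C(V) \to C(V)$ interpolating between $\mu$ and $\mu'$, and (in the nonsymmetric case) a coderivation homotopy relating $F' \circ \Phi$ to $F$. My expectation is that the obstruction-theoretic machinery already assembled to prove Theorem A transfers almost verbatim: the quasi-isomorphism hypothesis on $\underline{\Hom}_R(C(V),V) \to \underline{\Hom}_R(C(V),W)$ is exactly what is needed to kill the relevant cohomology classes arity by arity.

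First I would construct $\Phi$. A $C$-coalgebra morphism $C(V) \to C(V)$ is determined by its composite $C(V) \to V$, which I decompose by arity as $\Phi_1, \Phi_2, \Phi_3, \dots$ with $\Phi_1 = \mathrm{id}_V$ prescribed. The condition that $\Phi$ be a chain map with respect to $\mu$ and $\mu'$, i.e.\ $\Phi \circ \mu = \mu' \circ \Phi$, unwinds into an infinite list of equations, one per arity $n$. In arity $1$ it holds automatically since both coderivations have arity $1$ term $d_V$. For the inductive step I would assume $\Phi_1, \dots, \Phi_{n-1}$ have been chosen so that all equations in arities $< n$ hold, and then examine the arity $n$ equation. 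The terms involving the unknown $\Phi_n$ appear linearly, and after collecting them the equation should take the form $d(\Phi_n) = (\text{known terms built from } \Phi_{<n}, \mu, \mu')$, where $d$ is the differential on $\underline{\Hom}_R(C(V),V)$ induced by $d_V$ (and $d_V$ on the source). The key point, to be verified by a direct computation analogous to the one in Theorem A, is that the right-hand side is a cycle; its vanishing image in $\underline{\Hom}_R(C(V),W)$ under postcomposition with $f$ (together with the already-established relations) then forces it to be a boundary, by the quasi-isomorphism assumption. Solving for $\Phi_n$ completes the induction. Since $\Phi_1$ is the identity, $\Phi$ is automatically an isomorphism of $C$-coalgebras (its inverse is built in the same degreewise-unipotent fashion).

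Next I would produce the coderivation homotopy $H$ between $F' \circ \Phi$ and $F$ in the nonsymmetric case. Here $F' \circ \Phi$ and $F$ are both $C$-coalgebra morphisms $(C(V), \mu) \to (C(W), \nu)$ which are chain maps and agree in arity $1$ (both have arity $1$ term $f$, since $\Phi_1 = \mathrm{id}$). A coderivation homotopy is a degree $+1$ map $H \colon C(V) \to C(W)$ that is a coderivation along the pair of morphisms and satisfies $F' \circ \Phi - F = \nu \circ H + H \circ \mu$. I would again decompose $H$ by arity and solve the defining equation inductively: in arity $n$ one gets $d(H_n) = (F' \circ \Phi - F)_n - (\text{terms from } H_{<n})$, and the same cycle-then-boundary argument applies. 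The reason the symmetric case is excluded is the usual one: the notion of coderivation homotopy along two coalgebra morphisms is only well-behaved for a nonsymmetric cooperad, where the relevant bicomodule structure makes sense without invariance issues; for a symmetric $C$ the averaging needed to form $\mathbb S_n$-invariants can obstruct the construction of $H$ as a genuine coderivation homotopy.

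The main obstacle I anticipate is purely bookkeeping rather than conceptual: in each inductive step one must check that the right-hand side of the arity $n$ equation is genuinely a cycle in $\underline{\Hom}_R(C(V),V)$ (so that the quasi-isomorphism hypothesis can be invoked) and that its pushforward to $\underline{\Hom}_R(C(V),W)$ is a boundary for the correct reason. This amounts to expanding $\mu^2 = 0$, $(\mu')^2 = 0$, the chain-map identities for $F$ and $F'$, and the already-solved lower-arity equations, and verifying that everything assembles into the required coboundary relation. These are the same Maurer--Cartan-type computations that power Theorem A, so I would organize the argument to reuse that verification wherever possible rather than redoing it from scratch. For the homotopy $H$, the analogous check that the discrepancy $F' \circ \Phi - F$ assembles arity by arity into coboundaries is where the nonsymmetry of $C$ is actually used, and I would isolate that point explicitly.
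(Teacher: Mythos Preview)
Your plan to build $\Phi$ first and $H$ afterward does not work: the two inductions are genuinely coupled. At arity $n$ you correctly note that the obstruction $(\mu'\circ\Phi-\Phi\circ\mu)_{(n)}$ is a cycle in $\underline{\Hom}_R(C^n(V),V)$, but to invoke the quasi-isomorphism you must exhibit $f\circ(\mu'\circ\Phi-\Phi\circ\mu)_{(n)}$ as a boundary in $\underline{\Hom}_R(C^n(V),W)$, and the relations you have available at that point (the lower-arity equations for $\Phi$ together with $F\circ\mu=\nu\circ F$ and $F'\circ\mu'=\nu\circ F'$) do not do this. One computes $f\circ(\mu'\circ\Phi-\Phi\circ\mu)_{(n)}=(\nu\circ F'\circ\Phi - F'\circ\Phi\circ\mu)_{(n)}$, but since $F'\circ\Phi$ does not vanish in low arities the right-hand side is not $\partial$ of anything visible. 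The paper's fix is to carry $H$ along in the induction: with the hypothesis $F'\circ\Phi-F=\nu\circ H+H\circ\mu$ on $C^{\leq(n-1)}(V)$, the expression $F'\circ\Phi-F-\nu\circ H-H\circ\mu$ \emph{does} vanish in arities $<n$, and the identity
\[
f\circ(\mu'\circ\Phi-\Phi\circ\mu)_{(n)}=\partial\bigl((F'\circ\Phi-F-\nu\circ H-H\circ\mu)_{(n)}\bigr)
\]
gives the required boundary. The coupling runs the other way too: once $\Phi$ has been corrected so that $(\mu'\circ\Phi-\Phi\circ\mu)_{(n)}=0$, the arity $n$ discrepancy for $H$ is only a cycle, and the quasi-isomorphism writes it as $f\circ e'+\partial e''$ with $e'$ a cycle in $\underline{\Hom}_R(C^n(V),V)$. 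The term $f\circ e'$ cannot be absorbed into $H$; it is killed by modifying $\Phi_{(n)}$ by $e'$, which is impossible if $\Phi$ is already frozen.

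A secondary point: you place the role of nonsymmetry in the coboundary verification for $H$, but that is not where it enters. The paper runs exactly the same simultaneous induction for symmetric $C$, with $H$ taken to be an ordinary coderivation, and this still produces $\Phi$. Nonsymmetry is needed only so that $H$ can be chosen as an $(F'\circ\Phi,F)$-coderivation and thereby interpreted as a coderivation homotopy between the two $\infty$-morphisms.
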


\begin{thm}\label{thmD}
	Keep the hypotheses and notation of \cref{thmB}. Let $\nu$ and $\nu'$ be square zero coderivations of $C(W)$ obtained as in \cref{thmB}, and let $F, F' : C(V) \to C(W)$ be the corresponding morphisms. There exists a noncanonical isomorphism of $C$-coalgebras $\Psi \colon C(W) \to C(W)$ whose  arity $1$ term is the identity and which is a chain map with respect to the differentials defined by $\nu$ and $\nu'$. Moreover, if the cooperad $C$ is nonsymmetric then there exists a coderivation homotopy $H$ between $\Psi \circ F' $ and $F$. 
\end{thm}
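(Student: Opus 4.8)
The plan is to reproduce the argument of \cref{thmC} with the roles of source and target exchanged, using the quasi-isomorphism hypothesis of \cref{thmB} (precomposition with $f$) in place of the one from \cref{thmA}. I abbreviate the two convolution complexes by $\mathfrak g_W=\underline\Hom_R(C(W),W)$ and $\mathfrak a=\underline\Hom_R(C(V),W)$, whose differentials are assembled from the arity $1$ terms $d_W$ and $d_V$. Recall that a coderivation of $C(W)$ is the same datum as its corestriction in $\mathfrak g_W$, that a $C$-coalgebra morphism $C(V)\to C(W)$ is the same datum as its corestriction in $\mathfrak a$, and that by hypothesis the induced map $f^\ast\colon\mathfrak g_W\to\mathfrak a$ is a quasi-isomorphism. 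I also record the elements $f^\ast\nu,\,f^\ast\nu'\in\mathfrak a$ together with the element $f\circ\bar\mu\in\mathfrak a$ built from the corestriction of $\mu$.

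First I would construct $\Psi$ by induction on arity, declaring its arity $1$ term to be the identity. Corestricting the intertwining relation $\nu\circ\Psi=\Psi\circ\nu'$ and extracting the arity $n$ component isolates the unknown $\bar\Psi_n$ in an equation of the form $\partial\bar\Psi_n=\bar\nu'_n-\bar\nu_n+(\text{lower-arity terms})$ in $\mathfrak g_W$; call the right-hand side $R_n$. Using $\nu^2=(\nu')^2=0$ and the inductive hypothesis one checks that $R_n$ is a cycle, so the obstruction to solving is its class in $H(\mathfrak g_W)$. The point is that this class is killed by $f^\ast$: the relations $F\mu=\nu F$ and $F'\mu=\nu'F'$ of \cref{thmB} exhibit $f^\ast\nu$ and $f^\ast\nu'$ as gauge equivalent, order by order, to the common element $f\circ\bar\mu$ in $\mathfrak a$, the equivalences being recorded by the higher-arity parts of $F$ and $F'$; combining these with the already-constructed part of $\Psi$ produces an explicit primitive for $f^\ast R_n$. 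Since $f^\ast$ is injective on homology, $R_n$ is then itself a boundary and $\bar\Psi_n$ can be chosen. The resulting $C$-coalgebra endomorphism $\Psi$ is automatically invertible, its arity $1$ term being the identity. This step uses only the quasi-isomorphism of the given Hom complexes and is valid for arbitrary $C$.

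To produce the homotopy $H$ I would, assuming $C$ nonsymmetric, run the same inductive machine one more time. Now $\Psi\circ F'$ and $F$ are two morphisms $(C(V),\mu)\to(C(W),\nu)$ sharing the arity $1$ term $f$, and I would assemble the corestriction $\bar H\colon C(V)\to W$ of a coderivation homotopy between them, solving at each arity an equation $\partial\bar H_n=(\text{known})$ in $\mathfrak a$ whose right-hand side is forced to be a boundary by the primitives manufactured in the previous step. The nonsymmetric hypothesis enters exactly here, and for the same reason as in \cref{thmC}: for a nonsymmetric cooperad the pieces of a coderivation homotopy may be chosen freely, arity by arity, with no $\mathbb S_n$-equivariance to maintain, so the lifting goes through over any ring $R$; for a symmetric cooperad the analogous primitives would have to be made equivariant, which in general forces division by the orders of the symmetric groups.

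I expect the main obstacle to be the bookkeeping underlying the vanishing of the obstruction classes: one must verify that the lower-arity contributions really assemble into cycles $R_n$, and that the higher parts of $F$, $F'$ and of the partially built $\Psi$ combine to realize $f^\ast R_n$ (and later the obstruction to $\bar H_n$) as an honest boundary rather than merely a cycle of the correct homology class. Once these identities are established, the inductive extension at each arity is formal and proceeds exactly as in \cref{thmB} and \cref{thmC}.
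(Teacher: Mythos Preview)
Your overall strategy---induction on arity, with the quasi-isomorphism $f^\ast\colon\underline\Hom_R(C(W),W)\to\underline\Hom_R(C(V),W)$ used to kill obstructions---is the right one, and is exactly what the paper intends (it simply says the proof of \cref{thmD} is obtained from that of \cref{thmC} by the same modification that turns the proof of \cref{thmA} into that of \cref{thmB}). But your proposed \emph{organization} of the induction introduces a real gap.

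You plan to build $\Psi$ completely first, and only afterwards build $H$. In the paper's argument for \cref{thmC} (and hence for \cref{thmD}) the two are constructed \emph{simultaneously}, and this is not merely a convenience. Concretely, the identity one uses is
\[
(\nu\circ\Psi-\Psi\circ\nu')\circ F' \;=\; \nu\circ\bigl(\Psi\circ F'-F-\nu H-H\mu\bigr)\;-\;\bigl(\Psi\circ F'-F-\nu H-H\mu\bigr)\circ\mu,
\]
so that in arity $n$ the primitive witnessing that $f^\ast R_n$ is a boundary is precisely $(\Psi\circ F'-F-\nu H-H\mu)_{(n)}$, which involves $H$ in arities $<n$. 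Your claim that ``the higher parts of $F$, $F'$ and of the partially built $\Psi$'' alone produce a primitive is not justified: if one tries $(\Psi\circ F'-F)_{(n)}$, the leftover lower-arity terms do not cancel, because $\Psi\circ F'-F$ does not vanish below arity $n$ and is neither a coderivation nor a coalgebra map.

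The second step of your plan fails for the dual reason. Once $\Psi$ is frozen, the obstruction to choosing $H_{(n)}$ is the cycle $(\Psi\circ F'-F-\nu H-H\mu)_{(n)}$ in $\underline\Hom_R(C^n(V),W)$. The hypothesis only tells you this cycle equals $e'\circ C^n(f)+\partial e''$ for some cycle $e'\in\underline\Hom_R(C^n(W),W)$; in the paper's simultaneous scheme one absorbs $e'$ by modifying $\Psi_{(n)}$ (harmless since $e'$ is a cycle) and $e''$ by modifying $H_{(n)}$. With $\Psi$ already fixed you have no way to dispose of the $e'$-term, so the equation for $H_{(n)}$ need not be solvable.

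The fix is simple: run a single induction in which, at each arity $n$, you first correct $\Psi_{(n)}$ to make $(\nu\Psi-\Psi\nu')_{(n)}=0$, and then correct both $\Psi_{(n)}$ (by a cycle in $\underline\Hom_R(C^n(W),W)$) and $H_{(n)}$ to make $(\Psi\circ F'-F-\nu H-H\mu)_{(n)}=0$. This is exactly the proof of \cref{thmC} with source and target swapped, and is what the paper has in mind.
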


\addtocounter{thm}{-4}

\section{Recollections on coalgebras and coderivations}

Let $C$ be a conilpotent cooperad in dg $R$-modules. In the main results of the paper we will assume that $C(0)=0$ and $C(1) \cong R$; in particular, $C$-coalgebras generally do not have a counit. For any dg $R$-module $V$ and any $n \geq 0$ we set $C^n(V) = (C(n) \otimes V^{\otimes n})^{\mathbb S_n}$, where $(-)^{\mathbb S_n}$ denotes $\mathbb S_n$-invariants, and denote by
$$ C(V) = \bigoplus_{n \geq 0} C^n(V) $$
the cofree conilpotent $C$-coalgebra cogenerated by $V$. We also write $C^{\leq N}(V) = \bigoplus_{n =0}^NC^n(V)$, so that $C(V)$ is the increasing union of subcoalgebras $C^{\leq n}(V)$. Readers who are only interested in the $A_\infty$-case of the main theorem may restrict their attention to the case that $C(V)$ is the reduced tensor coalgebra on $V$. 

\subsection{Arity decomposition of coalgebra morphisms}

Let $E$ be a conilpotent dg $C$-coalgebra. The fact that $C(V)$ is cofree means precisely that coalgebra homomorphisms $E \to C(V)$ are in natural bijection with $R$-linear maps $E \to V$. In particular, this means that coalgebra homomorphisms between cofree coalgebras $C(V) \to C(W)$ are given by elements of
$$ \Hom_R(C(V),W) \cong \prod_{n\geq 0}\Hom_R(C^n(V),W).$$
If $F \colon C(V) \to C(W)$ is a coalgebra morphism then we denote by $F_{(n)} \colon C^n(V) \to W$ the corresponding factor in the above decomposition, and we call it the \emph{arity $n$ term} of $F$.

Any chain map $C(V) \to C(W)$ can be written as a sum of maps $C^n(V) \to C^k(W)$. If $F \colon C(V) \to C(W)$ is a coalgebra morphism with arity terms $F_{(i)}$, then the component $C^n(V) \to C^k(W)$ of $F$ can be described as follows: for any composition of the integer $n$ into $k$ summands, $n_1+\ldots+n_k$, we get a map 
\[\begin{tikzcd}
C^n(V) \arrow[r,"\textrm{cocomposition}"] &[30pt] C(k)  \otimes C^{n_1}(V) \otimes \ldots \otimes C^{n_k}(V) \arrow[r,"\mathrm{id}\otimes F_{(n_1)} \otimes \ldots \otimes F_{(n_k)}"]&[50pt] C(k) \otimes W^{\otimes k}.
\end{tikzcd}\]
If we sum these maps over all compositions of the integer $n$ into $k$ summands, then we land in the $\mathbb S_k$-invariants $(C(k) \otimes W^{\otimes k})^{\mathbb S_k}$, defining a chain map $C^n(V)\to C^k(W)$.

Using the previous paragraph we see in particular how to describe the arity terms of a composition $G \circ F$ of two coalgebra morphisms, $F \colon C(V) \to C(W)$ and $G \colon C(W) \to C(U)$, in terms of the arity terms of $F$ and $G$. Indeed, $(G \circ F)_{(n)}$ is given by the sum of all of the maps
$$ C^n(V) \to C^k(W) \to U$$
for $k=1,\ldots, n$, where the first arrow is described in the previous paragraph and the second is the arity $k$ term of $G$. 

\subsection{Arity decomposition of coderivations}
Let $E$ be a conilpotent $C$-coalgebra and $M$ an $E$-comodule. A map of dg $R$-modules $M \to E$ is called a \emph{coderivation} if it satisfies the co-Leibniz rule, meaning that the diagram
\[\begin{tikzcd}
	M \arrow[r] \arrow[d]& E \arrow[d]\\ \bigoplus_{k=0}^{n-1}
C(n) \otimes E^{\otimes k} \otimes M \otimes E^{\otimes (n-k-1)} \arrow[r]& C(n) \otimes E^{\otimes n} 
	\end{tikzcd}
\]
commutes for all $n$. 

The composition of two coderivations is not generally a coderivation, but if $\mu$ is a coderivation of odd degree then its square $\mu \circ \mu$ is also a coderivation. To see this one may use the co-Leibniz identity
$$ \Delta_n \circ \mu = \sum_{k=0}^{n-1} (\mathrm{id}_{C(n)} \otimes \mathrm{id}^{\otimes k} \otimes \mu \otimes \mathrm{id}^{\otimes (n-k-1)} ) \circ \Delta_n $$ 
to expand the composition $\Delta_n \circ \mu \circ \mu$; doing this, one ends up with $n$ terms of the form $$(\mathrm{id}_{C(n)} \otimes \mathrm{id}^{\otimes k} \otimes (\mu \circ \mu) \otimes \mathrm{id}^{\otimes (n-k-1)} ) \circ \Delta_n$$ 
and all remaining terms will be of the form $(\mathrm{id}_{C(n)} \otimes \mathrm{id}^{\otimes k} \otimes \mu \otimes \mathrm{id}^{\otimes l} \otimes \mu \otimes \mathrm{id}^{\otimes (n-k-l-1)} ) \circ \Delta_n$; however, all terms of the latter form appear twice with opposite signs because of the Koszul sign rule, since $\mu$ was of odd degree. A related fact, proved in a very similar way, is that the commutator of two coderivations $E \to E$ is a coderivation: $\mathrm{Coder}_R(E,E)$ is a Lie subalgebra of $\Hom_R(E,E)$. 

The pre- or postcomposition of a morphism of coalgebras with a coderivation is again a coderivation. 

Coderivations into the cofree coalgebra $C(V)$ are in natural bijection with linear maps into $V$, in the same way as we could describe coalgebra homomorphisms into cofree coalgebras. Specifically, we have for any $C(V)$-comodule $M$ a natural bijection
$$ \mathrm{Coder}_R(M,C(V)) \cong \Hom_R(M,V).$$
In particular, if we are given a coalgebra morphism $F \colon C(V) \to C(W)$, so that $C(V)$ is a $C(W)$-comodule, then coderivations from $C(V)$ to $C(W)$ are in bijection with elements of
$$ \prod_{n \geq 0} \Hom_R(C^n(V),W).$$
If $\eta \colon C(V) \to C(W)$ is a coderivation, then we denote by $\eta_{(n)} \colon C^n(V) \to W$ the $n$th factor of the above decomposition, and we call it the \emph{arity $n$ term} of $\eta$. 

Let $\eta \colon C(V) \to C(W)$ be a coderivation with respect to a coalgebra morphism $F \colon C(V) \to C(W)$. We can write $\eta$ as a sum of maps $C^n(V) \to C^k(W)$; let us express these maps in terms of the arity terms of $\eta$ and $F$, just like we did for coalgebra morphisms. Then $C^n(V) \to C^k(W)$ is given by the sum of the maps
\[\begin{tikzcd}
C^n(V) \arrow[r] & C(k)  \otimes C^{n_1}(V) \otimes \ldots \otimes C^{n_k}(V) \arrow[r,"\mathrm{id}\otimes F_{(n_1)} \otimes \ldots \otimes \eta_{(n_i)} \otimes \ldots \otimes F_{(n_k)}"]&[90pt] C(k) \otimes W^{\otimes k}
\end{tikzcd}\]
over all integer compositions $n=n_1 + \ldots + n_{i-1} + \boldsymbol{n_i} + n_{i+1} + \ldots + n_k$ with a distinguished summand. In particular, if $F$ is the identity map $C(V) \to C(V)$, then the component $C^n(V) \to C^k(V)$ of $\eta$ is given by summing up the maps
\[\begin{tikzcd}
C^n(V) \arrow[r] & C(k)  \otimes V^{\otimes (i-1)} \otimes C^{n-k+1}(V) \otimes V^{\otimes (k-i)} \arrow[r,"\eta_{(n-k+1)}"]&[15pt] C(k) \otimes V^{\otimes k}.
\end{tikzcd}\]
for $i=1,\ldots,k$. 

In particular, the above formula can be used to express the arity terms of a composition of a coderivation and a coalgebra morphism, or the commutator of two coderivations, in terms of their individual arity terms, exactly like we did previously for compositions of coalgebra morphisms. In the paper we will only apply this in the simplest case, which is when we have a coderivation $\eta$ which vanishes on $C^{\leq (n-1)}(V)$ and we want to calculate the arity $n$ term of a composition involving $\eta$; in this case, nearly all terms in the above sums will vanish. 

\section{Proof of the Homotopy Transfer Theorem}

We denote by $\underline{\Hom}_R$ the inner Hom of dg $R$-modules: if $(M,d_M)$ and $(N,d_N)$ are dg $R$-modules, then 
$$\underline{\Hom}_R(M,N)_n = \prod_{k \in \mathbf Z}\Hom_R(M_k,N_{n+k}),$$
with differential defined by $\partial f = d_N \circ f - (-1)^n f \circ d_M$ for $f \in \underline{\Hom}_R(M,N)_n$.

We fix a cooperad $C$ in dg $R$-modules satisfying $C(0)=0$ and $C(1) \cong R$. If $V$ is a dg $R$-module, then $C(V)$ is naturally differential graded; the induced differential of $C(V)$ is a coderivation, and its only nonzero arity term is the arity $1$ term. In the following proof we will repeatedly consider the Hom-complexes $\underline{\Hom}_R(C^n(V),W)$, where $(V,d_V)$ and $(W,d_W)$ are dg $R$-modules; the differential on $C^n(V)$ is induced by the differentials on $C(n)$ and on $V$, and we always use $\partial$ to denote the above differential on $\underline{\Hom}_R(C^n(V),W)$.

\begin{thm}\label{thmAagain}
	Let $(V,d_V)$ and $(W,d_W)$ be dg $R$-modules, and $f \colon V \to W$ a chain map. Let $\nu$ be a square-zero coderivation  of $C(W)$ of degree $-1$ whose arity $1$ term equals the given differential $d_W$. Assume that $f$ induces a quasi-isomorphism $\underline \Hom_R(C(V),V) \to \underline{\Hom}_R(C(V),W)$. Then there exists noncanonically a square-zero coderivation $\mu$ of $C(V)$ whose  arity $1$ term is $d_V$, and a morphism of $C$-coalgebras $F \colon C(V) \to C(W)$ whose  arity $1$ term is $f$ and which is a chain map with respect to the differentials defined by $\mu$ and $\nu$. 
\end{thm}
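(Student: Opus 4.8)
The plan is to follow Kadeishvili's inductive strategy, constructing the arity terms $\mu_{(n)} \colon C^n(V) \to V$ of the coderivation $\mu$ and $F_{(n)} \colon C^n(V) \to W$ of the morphism $F$ simultaneously, by induction on $n$. For $n=1$ we simply set $\mu_{(1)} = d_V$ and $F_{(1)} = f$, which is legitimate since $f$ is a chain map. The key observation is that the conditions we must enforce — that $\mu$ squares to zero and that $F$ intertwines the differentials defined by $\mu$ and $\nu$ — unpack, arity term by arity term, into a sequence of equations of the shape $\partial(\mu_{(n)}) = (\text{expression in lower arity terms})$ and $\partial(F_{(n)}) = (\text{expression in lower arity terms})$, living in the Hom-complexes $\underline{\Hom}_R(C^n(V),V)$ and $\underline{\Hom}_R(C^n(V),W)$ respectively. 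Here $\partial$ is the differential recalled just above, and the right-hand sides involve only $\mu_{(k)}, F_{(k)}$ for $k < n$ by the arity-decomposition formulas of Section 2.

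First I would write $\mu = d_V + \bar\mu$ where $\bar\mu$ collects the arity ${\geq}2$ terms, and expand $\mu \circ \mu = 0$ using the arity decomposition of the commutator/square of a coderivation. Isolating the arity $n$ term yields an identity of the form $\partial \mu_{(n)} = P_n$, where $P_n$ is a polynomial expression in $\mu_{(2)}, \dots, \mu_{(n-1)}$. Similarly, expanding the chain-map condition $F \circ \mu = \nu \circ F$ (an equality of coderivations $C(V) \to C(W)$ along $F$) and extracting the arity $n$ component gives $\partial F_{(n)} = Q_n$, with $Q_n$ a polynomial expression in $f = F_{(1)}, F_{(2)}, \dots, F_{(n-1)}$ and $\mu_{(2)}, \dots, \mu_{(n)}$ together with the arity terms of $\nu$. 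I would carry the two inductions together, at each stage first solving for $\mu_{(n)}$ and then for $F_{(n)}$.

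The main obstacle is solvability: at each stage I must produce $\mu_{(n)}$ with $\partial \mu_{(n)} = P_n$ and $F_{(n)}$ with $\partial F_{(n)} = Q_n$. The standard approach is to verify that $P_n$ and $Q_n$ are $\partial$-cycles (which follows formally from the associativity/co-Leibniz identities together with the inductive hypotheses, after a careful Koszul-sign bookkeeping argument of the same flavor as the one showing $\mu^2$ is a coderivation), and then to invoke an exactness statement to conclude they are boundaries. This is precisely where the quasi-isomorphism hypothesis enters. The map $f$ induces a quasi-isomorphism $\underline{\Hom}_R(C(V),V) \to \underline{\Hom}_R(C(V),W)$, which splits as a quasi-isomorphism $\underline{\Hom}_R(C^n(V),V) \to \underline{\Hom}_R(C^n(V),W)$ in each arity. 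I expect the two solvability problems to be coupled: rather than solving $\partial \mu_{(n)} = P_n$ in isolation and then $\partial F_{(n)} = Q_n$, the correct move is to use that the cone of $f_* \colon \underline{\Hom}_R(C^n(V),V) \to \underline{\Hom}_R(C^n(V),W)$ is acyclic, so that the pair $(P_n, Q_n)$ — suitably assembled into a cycle in this mapping cone — bounds, simultaneously producing $\mu_{(n)}$ and $F_{(n)}$ satisfying both equations. Verifying that the assembled pair is a cycle in the cone, and identifying the boundary condition with exactly the two equations above, is the crux; once that is in place the induction runs, and the only hypothesis used is the quasi-isomorphism assumption, with no need for $\Sigma$-cofibrancy or a quasi-inverse for $f$.
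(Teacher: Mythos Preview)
Your proposal is correct and follows essentially the same inductive strategy as the paper, but two points of execution differ and are worth noting. First, where you plan to verify by ``careful Koszul-sign bookkeeping'' that the obstructions $P_n$ and $Q_n$ are $\partial$-cycles, the paper avoids any computation by writing down two tautological identities, $\mu \circ (\mu\circ\mu) - (\mu\circ\mu)\circ\mu = 0$ and $F\circ(\mu\circ\mu) = (F\circ\mu-\nu\circ F)\circ\mu + \nu\circ(F\circ\mu-\nu\circ F)$, and reading off their arity $n$ terms using only that the parenthesized coderivations vanish in arities below $n$; this immediately gives $\partial(\mu\circ\mu)_{(n)}=0$ and $f\circ(\mu\circ\mu)_{(n)} = \partial(F\circ\mu-\nu\circ F)_{(n)}$ with no sign-chasing. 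Second, your mapping-cone packaging is equivalent to but slightly slicker than the paper's two-step use of the quasi-isomorphism hypothesis (first injectivity on homology to kill $(\mu\circ\mu)_{(n)}$, then surjectivity to kill $(F\circ\mu-\nu\circ F)_{(n)}$); note however that your $Q_n$ as written still contains the term $f\circ\mu_{(n)}$, so to make the cone argument literal you must separate that term out and pair $P_n$ with the remainder --- this is exactly the coupling you anticipate, and it matches the paper's relation $f\circ(\mu\circ\mu)_{(n)} = \partial(F\circ\mu-\nu\circ F)_{(n)}$.
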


\begin{proof}
Let $n \geq 2$, and suppose we are given a degree $-1$ coderivation $\mu \colon C(V) \to C(V)$ with $\mu_{(1)} = d_V$, and a coalgebra morphism $F \colon C(V) \to C(W)$ with $F_{(1)} = f$, such that the restrictions of $\mu$ and $F$ to $C^{\leq (n-1)}(V)$ satisfy 
$$ \begin{cases} \mu \circ \mu =0 \\ F \circ \mu - \nu \circ F = 0. \end{cases}$$
We will prove that by modifying the arity $n$ terms of $\mu$ and $F$ we can arrange so that these equations are satisfied also on $C^{\leq n}(V)$. This will finish the proof by induction on $n$, since both equations are clearly satisfied on $C^{\leq 1}(V)=V$. 

Note that $F \circ \mu - \nu \circ F$ is a coderivation, and so is $\mu \circ \mu$. Now the following equations are obviously satisfied:
\begin{equation}\label{1}
\mu \circ (\mu \circ \mu) - (\mu \circ \mu) \circ \mu = 0
\end{equation}
and
\begin{equation}\label{2}
F \circ (\mu \circ \mu) = (F \circ \mu - \nu \circ F) \circ \mu + \nu \circ (F \circ \mu - \nu \circ F).
\end{equation}
Let us compute the arity $n$ term of both these equations. For \eqref{1}, we use that terms of $\mu \circ \mu$ of arity less than $n$ vanish. Hence the arity $n$ term of the left hand side of \eqref{1} is given by a sum over all ways of precomposing and postcomposing $(\mu \circ \mu)_{(n)}$ with $\mu_{(1)}$, as all other terms in this arity vanish, so that we obtain the identity
\begin{equation}
\partial (\mu \circ \mu)_{(n)} = 0 \qquad \text{in  }  \underline{\Hom}_R(C^{n}(V),V). \label{3}
\end{equation}
Similarly we may consider the arity $n$ term of \eqref{2}, and use that all terms of both  $\mu \circ \mu$  and $(F \circ \mu - \nu \circ F)$ of arity below $n$ vanish. We obtain from \eqref{2} the identity 
\begin{equation}\label{4} f \circ (\mu \circ \mu)_{(n)} = \partial  (F \circ \mu - \nu \circ F)_{(n)} \qquad \text{in  }  \underline \Hom_R(C^{n}(V),W),\end{equation}
since the differential in $\underline \Hom_R(C^{n}(V),W)$ is the sum over all ways of precomposing and postcomposing with $\mu_{(1)}$ and $\nu_{(1)}$, respectively. 

Recall that $f$ induces a quasi-isomorphism of chain complexes
$$ \underline \Hom_R(C^n(V),V) \to \underline \Hom_R(C^ n(V),W).$$ Now $(\mu \circ \mu)_{(n)}$ is a cycle by \eqref{3}, and it is mapped under $f$ to a boundary by \eqref{4}. It follows that $(\mu \circ \mu)_{(n)}$ must itself be a boundary in $\underline \Hom_R(C^{n}(V),V)$; say that there exists $e \colon C^{n}(V) \to V$ of degree $-1$ such that $\partial e = (\mu \circ \mu)_{(n)}$. Let $\mu'$ be the coderivation of $C(V)$ which has the same arity terms as $\mu$ except $\mu_{(n)}' = \mu_{(n)}-e$.  Then
\begin{equation}
(\mu ' \circ \mu')_{(n)} = 0, \label{5}
\end{equation} 
i.e.\ $\mu' \circ \mu'=0$ in $C^{\leq n}(V)$. Moreover we then have by \eqref{4} that $(F \circ \mu' - \nu \circ F)$ is a cycle in $\underline\Hom_R(C^{n}(V),W)$, which (again since $ \underline\Hom_R(C^n(V),V) \to \underline\Hom_R(C^n(V),W)$ is a quasi-isomorphism) means that it can be written as the sum of the image of a cycle under $f$, and a boundary. Thus we choose $e' \in \underline\Hom_R(C^{n}(V),V)$ with $\partial e'=0$, and $e'' \in \underline\Hom_R(C^{n}(V),W)$, such that 
\begin{equation}
(F \circ \mu' - \nu \circ F )_{(n)} = f \circ e' + \partial e''. \label{6}
\end{equation}
Let $\mu''$ be the coderivation of $C(V)$ which has the same arity terms as $\mu'$ except $\mu_{(n)}'' = \mu_{(n)}'-e'$, and define similary a new morphism $F'$ with $F'_{(n)}=F_{(n)}- e''$. Since we only modified $\mu'$ by adding a cycle, we can still argue as in \eqref{5} to see that $\mu'' \circ \mu'' = 0$ in $C^{\leq n}(V)$. Moreover, it is straightforward to check that \eqref{6} says exactly that $(F' \circ \mu'' - \nu \circ F' )_{(n)} = 0$. The theorem is proven.\end{proof}

The preceding proof is essentially Kadeishvili's. Let us make the comparison explicit. At one point of the argument Kadeishvili writes ``Direct calculations show that $\partial U_n=0$'', and later that ``The remaining condition (1) can be proved by a straightforward check''. Kadeishvili's $\partial U_n$ is our $\partial (F \circ \mu - \nu \circ F)_{(n)}$, and his condition (1) is our condition $ (\mu \circ \mu)_{(n)} = 0$. As in the above argument a calculation shows that $f \circ (\mu \circ \mu)_{(n)} = \partial (F \circ \mu - \nu \circ F)_{(n)}$. For Kadeishvili the map $f$ is a cycle-choosing homomorphism, so the fact that the value of $f$ is a boundary implies both that $(\mu \circ \mu)_{(n)}=0$ and $(F \circ \mu - \nu \circ F)_{(n)} = 0$. 

A final comment is that the condition that $\underline \Hom_R(C(V),V) \to \underline{\Hom}_R(C(V),W)$ is a quasi-isomorphism is forced upon us very naturally by the structure of the inductive strategy. Firstly we find a cycle in $\underline \Hom_R(C(V),V)$ whose image in $\underline{\Hom}_R(C(V),W)$ is a boundary, and what we need to continue the process is that the cycle is a boundary already in $\underline \Hom_R(C(V),V)$, which means precisely that the map is injective on homology. Secondly we find a class in $\underline{\Hom}_R(C(V),W)$, and what we need to continue the process is that the cycle can be written as the sum of a boundary and a cycle that is in the image, which means precisely that the map is surjective on homology. Some versions of homotopy transfer in the literature only require a one-sided inverse to the map we transfer along (see e.g.\ \cite{markltransferring}), which may suggest that it would suffice to assume that the induced map on homology is injective or surjective, but I do not see a natural way to carry out the strategy under any such milder assumption. The group homomorphism $\mathbf Z/2 \to \mathbf Z/4$ can not be turned into an algebra homomorphism for any ring structure on $\mathbf Z/4$, which shows at least that the main result of \cite{markltransferring} does not hold integrally. 

\section{Variations}

Let us briefly explain the necessary modifications of the argument to obtain  the transfer in the other direction, and the uniqueness of the transferred structure.

\begin{thm}
	Let $(V,d_V)$ and $(W,d_W)$ be dg $R$-modules, and $f \colon V \to W$ a chain map. Let $\mu$ be a square-zero coderivation  of $C(V)$ of degree $-1$ whose arity $1$ term equals the given differential $d_V$. Assume that $f$ induces a quasi-isomorphism $\underline \Hom_R(C(W),W) \to \underline{\Hom}_R(C(V),W)$. Then there exists noncanonically a square-zero coderivation $\nu$ of $C(W)$ whose  arity $1$ term is $d_W$, and a morphism of $C$-coalgebras $F \colon C(V) \to C(W)$ whose  arity $1$ term is $f$ and which is a chain map with respect to the differentials defined by $\mu$ and $\nu$. 
	
\end{thm}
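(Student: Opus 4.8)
The plan is to mirror the inductive argument from Theorem \ref{thmA}, but with the roles of $V$ and $W$ interchanged and the direction of the morphism $F$ kept fixed as $C(V) \to C(W)$. I would again argue by induction on the arity $n$: suppose I have a degree $-1$ coderivation $\nu$ of $C(W)$ with $\nu_{(1)} = d_W$ and a coalgebra morphism $F \colon C(V) \to C(W)$ with $F_{(1)} = f$ such that, on $C^{\leq (n-1)}(V)$ and $C^{\leq(n-1)}(W)$ respectively, the equations $\nu \circ \nu = 0$ and $F \circ \mu - \nu \circ F = 0$ hold. The base case is again immediate since both equations hold on arity $1$. The key difference from Theorem \ref{thmA} is that now the coderivation I need to adjust is $\nu$ on $C(W)$, whose arity $n$ term lives in $\underline\Hom_R(C^n(W),W)$, whereas the compatibility defect $F \circ \mu - \nu \circ F$ still has its arity $n$ term in $\underline\Hom_R(C^n(V),W)$. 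This is exactly why the hypothesis is now that $f$ induces a quasi-isomorphism $\underline\Hom_R(C(W),W) \to \underline\Hom_R(C(V),W)$ (precomposition with $f$), rather than postcomposition as before.

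Concretely, I would first consider the auxiliary identity $(\nu \circ \nu) \circ \nu - \nu \circ (\nu \circ \nu) = 0$ and extract its arity $n$ term to obtain $\partial (\nu \circ \nu)_{(n)} = 0$ in $\underline\Hom_R(C^n(W),W)$, so that $(\nu \circ \nu)_{(n)}$ is a cycle. Next I would use the analogue of equation \eqref{2}, namely an identity expressing $(\nu \circ \nu) \circ F$ in terms of $(F \circ \mu - \nu \circ F)$ composed appropriately with $\mu$ and $\nu$; taking its arity $n$ term and using that all lower-arity defects vanish should yield $(\nu \circ \nu)_{(n)} \circ F_{(1)}^{\otimes n}$-type expression, i.e.\ the \emph{precomposition} of $(\nu \circ \nu)_{(n)}$ with $f$ equals $\partial(F \circ \mu - \nu \circ F)_{(n)}$ in $\underline\Hom_R(C^n(V),W)$. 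Thus $(\nu \circ \nu)_{(n)}$ is a cycle in $\underline\Hom_R(C^n(W),W)$ whose image under the precomposition map is a boundary; since that map is a quasi-isomorphism by hypothesis, $(\nu\circ\nu)_{(n)}$ is itself a boundary, say $\partial e = (\nu \circ \nu)_{(n)}$ with $e \colon C^n(W) \to W$ of degree $-1$. Replacing $\nu_{(n)}$ by $\nu_{(n)} - e$ gives a new $\nu'$ with $\nu' \circ \nu' = 0$ on $C^{\leq n}(W)$.

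The second half of the induction proceeds just as in Theorem \ref{thmA}: after this replacement, $(F \circ \mu - \nu' \circ F)_{(n)}$ is a cycle in $\underline\Hom_R(C^n(V),W)$, and surjectivity on homology (again from the quasi-isomorphism hypothesis) lets me write it as $f \circ e' + \partial e''$ where $e' \colon C^n(W) \to W$ is a cycle and $e''\colon C^n(V) \to W$; here $f \circ e'$ denotes the precomposition of the cycle $e'$ by $f$. I then adjust $\nu'$ to $\nu''$ by subtracting $e'$ (which is a cycle, so $\nu'' \circ \nu'' = 0$ still holds on $C^{\leq n}(W)$) and adjust $F$ to $F'$ by setting $F'_{(n)} = F_{(n)} - e''$, which forces $(F' \circ \mu - \nu'' \circ F')_{(n)} = 0$. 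The main obstacle, and the point requiring genuine care rather than a verbatim copy, is the bookkeeping in deriving the arity $n$ term of the analogue of \eqref{4}: I must verify that composing the defect $\nu \circ \nu$ on $C(W)$ against the morphism $F$ really does reduce, in arity $n$, to precomposition of $(\nu \circ \nu)_{(n)}$ with $f = F_{(1)}$, using the arity decomposition formulas from Section 2 and the vanishing of all lower-arity terms. Once that identity is correctly established, the homological algebra is identical to the first theorem, with postcomposition by $f$ systematically replaced by precomposition.
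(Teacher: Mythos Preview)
Your proposal is correct and follows essentially the same approach as the paper's proof: the same inductive setup, the same two auxiliary identities (the associativity-type identity for $\nu\circ\nu$ and the identity expressing $(\nu\circ\nu)\circ F$ via the defect $F\circ\mu-\nu\circ F$), and the same two-step adjustment of $\nu_{(n)}$ and $F_{(n)}$ using the quasi-isomorphism hypothesis. The only cosmetic difference is notation: what you write as ``$f\circ e'$'' and describe as precomposition is written in the paper as $e'\circ C^n(f)$, where $C^n(f)\colon C^n(V)\to C^n(W)$ is $\mathrm{id}_{C(n)}\otimes f^{\otimes n}$ on invariants; your phrase ``$(\nu\circ\nu)_{(n)}\circ F_{(1)}^{\otimes n}$-type expression'' already indicates you have the right map in mind.
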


\begin{proof}The structure of the argument is the same as in the proof of \cref{thmAagain}. We suppose instead that we have a degree $-1$ coderivation $\nu \colon C(W) \to C(W)$ with $\nu_{(1)} = d_W$, and a morphism $F \colon C(V) \to C(W)$ with $F_{(1)} = f$, such that the restrictions of $\nu$ and $F$ to $C^{\leq (n-1)}(W)$, resp.\ $C^{\leq (n-1)}(V)$, satisfy 
	$$ \begin{cases} \nu \circ \nu =0 \\ F \circ \mu - \nu \circ F = 0. \end{cases}$$
We now consider the two equations
	\begin{equation*}\label{11}
	\nu \circ (\nu \circ \nu) - (\nu \circ \nu) \circ \nu = 0
	\end{equation*}
	and
	\begin{equation*}\label{21}
	(\nu \circ \nu) \circ F = (F \circ \mu - \nu \circ F) \circ \mu + \nu \circ (F \circ \mu - \nu \circ F)
	\end{equation*}
	and compute the arity $n$ term of both these equations. By the same argument as before we obtain
	\begin{equation*}
	\partial (\nu \circ \nu)_{(n)} = 0 \qquad \text{in  }  \underline{\Hom}_R(C^{n}(W),W), 
	\end{equation*}and the identity 
	\begin{equation*}(\nu \circ \nu)_{(n)} \circ C^n(f) = \partial  (F \circ \mu - \nu \circ F)_{(n)} \qquad \text{in  }  \underline \Hom_R(C^{n}(V),W),\end{equation*}
	where $C^n(f)$ denotes the map $(C(n) \otimes V^{\otimes n})^{\mathbb S_n} \stackrel{\mathrm{id} \otimes f^{\otimes n}} \longrightarrow (C(n) \otimes W^{\otimes n})^{\mathbb S_n}$. By the same argument as before it follows that $(\nu \circ \nu)_n$ is a boundary in $\underline \Hom_R(C^{n}(W),W)$; say that there exists $e \colon C^{n}(W) \to W$ such that $\partial e = (\nu \circ \nu)_{(n)}$. Let $\nu'$ be the coderivation of $C(W)$  for which $\nu_{(n)}' = \nu_{(n)}-e$.  Then
	\begin{equation*}
	(\nu ' \circ \nu')_{(n)} = 0,
	\end{equation*} 
	and it follows that $(F \circ \mu - \nu' \circ F)$ is a cycle in $\underline\Hom_R(C^{n}(V),W)$, which again means that it can be written as the sum of the image of a cycle under $f$, and a boundary. Thus we choose $e' \in \underline\Hom_R(C^{n}(W),W)$ with $\partial e'=0$, and $e'' \in \underline\Hom_R(C^{n}(V),W)$, such that 
	\begin{equation*}
	(F \circ \mu - \nu' \circ F )_{(n)} = e' \circ C^n(f) + \partial e''.
	\end{equation*}
	Let $\nu''$ be the coderivation of $C(W)$ with $\nu_{(n)}'' = \nu_{(n)}'-e'$, and let similarly $F'$ be the coalgebra morphism with $F'_{(n)}=F_{(n)}- e''$. By the same argument as before we see that $(F' \circ \mu - \nu'' \circ F' )_{(n)} = 0$ as claimed. 
\end{proof}

Suppose that $C$ is a nonsymmetric cooperad, and let $F, F' \colon E \to D$ be morphisms of $C$-coalgebras. An \emph{$(F,F')$-coderivation} is a map $H \colon E \to D$ making the following diagram commute for all $n$:
\[\begin{tikzcd}[column sep=6cm]
E \arrow[r,"H"] \arrow[d]& D \arrow[d]\\
C(n) \otimes E^{\otimes n}  \arrow[r,"\sum_{k=0}^{n-1} \mathrm{id}_{C(n)} \otimes F^{\otimes k} \otimes H \otimes (F')^{\otimes (n-k+1)}"]& C(n) \otimes D^{\otimes n} .
\end{tikzcd}
\]
An $(F,F')$-coderivation $E \to C(W)$ is completely determined by its composition with the projection onto the cogenerators $C(W) \to W$, just as for usual coderivations. This sets up a natural bijection between $(F,F')$-coderivations $E \to C(W)$ and $R$-linear maps $E \to W$. If $H \colon C(V) \to C(W)$ is an $(F,F')$-coderivation then we write $H_{(n)}$ for the corresponding map $C^n(V) \to W$, the ``arity $n$ term'' of $H$. We say that an $(F,F')$-coderivation $H$ is a \emph{coderivation homotopy} between $F$ and $F'$ if $\partial H = F - F'$ in $\underline{\Hom}_R(E,D)$.
When $C$ is the coassociative cooperad, this specializes to the usual notion of a homotopy between two $A_\infty$-morphisms, and it can be interpreted as the homotopy relation on morphisms defined in terms of a cylinder functor in a suitable model category of coalgebras \cite[Section 1.3.4]{lefevre-hasegawa}. 

If $C$ is a symmetric cooperad the above definition of $(F,F')$-coderivation still makes sense but is not useful; the image of the left vertical arrow in the diagram lands in the $\mathbb S_n$-invariants and the lower horizontal arrow is very much not $\mathbb S_n$-invariant, so nontrivial $(F,F')$-coderivations will generally not exist. 

\begin{thm}
	Keep the hypotheses and notation of \cref{thmA}. Let $\mu$ and $\mu'$ be square zero coderivations of $C(V)$ obtained as in \cref{thmA}, and let $F, F' : C(V) \to C(W)$ be the corresponding morphisms. There exists a noncanonical isomorphism of $C$-coalgebras $\Phi \colon C(V) \to C(V)$ whose  arity $1$ term is the identity and which is a chain map with respect to the differentials defined by $\mu$ and $\mu'$. Moreover, if the cooperad $C$ is nonsymmetric then there exists a coderivation homotopy between $F' \circ \Phi$ and $F$. 
\end{thm}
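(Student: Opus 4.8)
The plan is to follow the inductive construction in the proof of \cref{thmAagain}, building the comparison map $\Phi$ one arity at a time. Set $\Gamma := \Phi \circ \mu - \mu' \circ \Phi$, the defect measuring the failure of $\Phi$ to be a chain map; since the pre- and postcomposition of a morphism with a coderivation is a coderivation, $\Gamma$ is a coderivation with respect to $\Phi$. I would assume inductively that $\Phi$ has been defined with $\Phi_{(1)} = \mathrm{id}$ and $\Gamma$ vanishing on $C^{\leq (n-1)}(V)$, and then adjust the arity $n$ term $\Phi_{(n)}$ so that $\Gamma$ vanishes on $C^{\leq n}(V)$; since $\Phi_{(1)}$ is invertible, the resulting morphism is automatically an isomorphism.

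The first half of the step parallels \cref{thmAagain} exactly. Using $\mu \circ \mu = 0$ and $\mu' \circ \mu' = 0$ one has the manifest identity $\mu' \circ \Gamma + \Gamma \circ \mu = 0$, whose arity $n$ term (only $\mu_{(1)} = \mu'_{(1)} = d_V$ contributes, as $\Gamma$ vanishes below arity $n$) reads $\partial \Gamma_{(n)} = 0$, so $\Gamma_{(n)}$ is a cycle in $\underline{\Hom}_R(C^n(V),V)$. Writing $G := F' \circ \Phi$, a morphism with arity $1$ term $f$, and using $F' \circ \mu' = \nu \circ F'$, one computes $F' \circ \Gamma = G \circ \mu - \nu \circ G$, and taking arity $n$ terms gives $f \circ \Gamma_{(n)} = (G \circ \mu - \nu \circ G)_{(n)}$. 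Thus, if the right-hand side is a \emph{boundary}, the quasi-isomorphism hypothesis (injectivity on homology) forces $\Gamma_{(n)} = \partial e$ for some $e \colon C^n(V) \to V$, and replacing $\Phi_{(n)}$ by $\Phi_{(n)} - e$ kills $\Gamma_{(n)}$.

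Everything therefore reduces to producing a primitive for $f \circ \Gamma_{(n)}$. When $C$ is nonsymmetric I would obtain this for free, and prove the second assertion at the same time, by enlarging the induction to carry along a coderivation homotopy $H$: maintaining an $(F,G)$-coderivation $H$ with $\Xi := \partial H - (F - G)$ vanishing below arity $n$ (note $F - G$ is itself an $(F,G)$-coderivation, by telescoping), the relation $\partial F = 0$ gives $G \circ \mu - \nu \circ G = \partial(F - G)$, and since $F - G = \partial H - \Xi$ we get $\partial(F-G) = -\partial \Xi$; as $\Xi$ is an $(F,G)$-coderivation vanishing below arity $n$, its arity $n$ term satisfies $(\partial \Xi)_{(n)} = \partial \Xi_{(n)}$, whence $f \circ \Gamma_{(n)} = -\partial \Xi_{(n)}$ is a boundary. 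After correcting $\Phi_{(n)}$ as above, $\Xi_{(n)}$ becomes a cycle, and I would then use the \emph{surjectivity} half of the quasi-isomorphism — writing $\Xi_{(n)} = f \circ c + \partial h$ with $c$ a cycle — together with the remaining freedom to modify $\Phi_{(n)}$ by the cycle $c$ and to modify $H_{(n)}$ by $h$, in order to also arrange $\Xi_{(n)} = 0$. This is the same two-step pattern (injectivity, then surjectivity) as in \cref{thmAagain}, and it simultaneously yields $\Phi$ and a coderivation homotopy between $F' \circ \Phi$ and $F$.

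The main obstacle is the symmetric case of the first assertion, where $\Phi$ must still be produced but the coderivation homotopy $H$ does not exist. One can no longer form $\Xi$ as a coderivation, so the clean primitive $-\partial \Xi_{(n)}$ for $f \circ \Gamma_{(n)}$ is unavailable; if one replaces $H$ by a bare family of maps $H_{(m)} \colon C^m(V) \to W$, the arity $n$ term of $\partial \Xi$ picks up extra terms built from the higher-arity components $C^n(V) \to C^k(W)$ of $G - F$, precisely the contributions that a genuine coderivation homotopy would absorb. Because the class $[\Gamma_{(n)}]$ is only accessible after applying the injective map induced by $f$, some primitive for $f \circ \Gamma_{(n)}$ must be produced regardless; so the crux of the symmetric case is to construct such a primitive by a more delicate, non-coderivation inductive bookkeeping, which I expect to be the real technical heart of the theorem.
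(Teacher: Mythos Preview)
Your treatment of the nonsymmetric case is correct and is essentially the paper's proof: the same inductive scheme carrying both $\Phi$ and the homotopy $H$, the same pair of identities (yours are $\mu'\circ\Gamma+\Gamma\circ\mu=0$ and $F'\circ\Gamma=\nu\circ\Xi-\Xi\circ\mu$, matching the paper's up to your sign convention $\Gamma=\Phi\circ\mu-\mu'\circ\Phi$), and the same two-step correction using first injectivity and then surjectivity of $f_*$ on homology.

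The gap is in your assessment of the symmetric case. You are right that $(F,G)$-coderivations are unavailable there, but you then treat the construction of a primitive for $f\circ\Gamma_{(n)}$ as the ``real technical heart'' requiring new non-coderivation bookkeeping. The paper's resolution is far simpler: one carries along an \emph{ordinary} coderivation $H\colon C(V)\to C(W)$ (with respect to a fixed coalgebra morphism) in place of the $(F'\circ\Phi,F)$-coderivation, and runs the identical inductive argument with the same two displayed equations. An ordinary coderivation into a cofree coalgebra is still determined by its arity components $H_{(m)}\colon C^m(V)\to W$, so it can be adjusted arity by arity exactly as before, and the quantity $F'\circ\Phi-F-\nu\circ H-H\circ\mu$ still has the property that vanishing below arity $n$ forces its restriction to $C^n(V)$ to land in $W$, so the arity-$n$ calculus goes through unchanged. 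The only thing that is lost is the interpretation of $H$ as a homotopy between $\infty$-morphisms --- which is precisely why the second assertion of the theorem is stated only for nonsymmetric $C$. In short, the symmetric case needs no new idea; the missing observation is that an ordinary coderivation suffices as the auxiliary object.
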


\begin{proof}Let us focus on the case that $C$ is nonsymmetric. Let $n \geq 2$, and suppose we are given a morphism $\Phi \colon C(V) \to C(V)$ with $\Phi_{(1)}=\mathrm{id}_V$, and a degree $1$ $(F' \circ \Phi,F)$-coderivation $H$ with $H_{(1)}=0$, such that the restrictions of $\Phi$ and $H$ to $C^{\leq(n-1)}(V)$ satisfy the equations
	$$ \begin{cases} \mu' \circ \Phi - \Phi \circ \mu = 0 \\ F' \circ \Phi - F = \nu \circ H + H \circ \mu.\end{cases} $$
	One easily checks that the following two equations are satisfied:
	$$ \mu' \circ (\mu' \circ \Phi - \Phi \circ \mu) + (\mu' \circ \Phi - \Phi \circ \mu) \circ \mu = 0$$
	and
	$$ F \circ (\mu' \circ \Phi - \Phi \circ \mu) = \nu \circ (F' \circ \Phi - F - \nu \circ H - H \circ \mu) -(F' \circ \Phi - F - \nu \circ H - H \circ \mu) \circ \mu. $$
	Considering the arity $n$ term of these equations and using the fact that both $\mu' \circ \Phi - \Phi \circ \mu = 0$ and $F' \circ \Phi - F - \nu \circ H - H \circ \mu = 0$ in arities below $n$, we deduce that
	$$ \partial (\mu' \circ \Phi - \Phi \circ \mu)_{(n)} = 0$$
	and
	$$ f \circ (\mu' \circ \Phi - \Phi \circ \mu)_{(n)} = \partial \Big( (F' \circ \Phi)_{(n)} - F_{(n)} - (\nu \circ H + H \circ \mu)_{(n)} \Big).$$
	As in the previous proofs it follows that $(\mu' \circ \Phi - \Phi \circ \mu)_{(n)}$ is itself a boundary, say $\partial e$. Then if we let $\Phi'$ denote the morphism which has $\Phi'_{(n)} = \Phi_{(n)} - e$ and agrees with $\Phi$ in all other arities, we will have $ (\mu' \circ \Phi' - \Phi' \circ \mu)_{(n)} = 0$. Hence also
	$$ \partial \Big( (F' \circ \Phi')_{(n)} - F_{(n)} - (\nu \circ H + H \circ \mu)_{(n)}\Big) = 0,$$
	and so (again using that $f$ induces a quasi-isomorphism  $ \underline\Hom_R(C^n(V),V) \to \underline\Hom_R(C^n(V),W)$) we see that $(F' \circ \Phi')_{(n)} - F_{(n)} - (\nu \circ H + H \circ \mu)_{(n)}$ is the sum of a boundary and the image of a cycle under $f$, say $f \circ e' + \partial e''$. We now modify $\Phi'$ and $H$ in arity $n$, setting $\Phi''_{(n)} = \Phi'_{(n)} - e'$ and $H'_{(n)} = H_{(n)}-e''$. Now $(F' \circ \Phi'')_{(n)} - F_{(n)} - (\nu \circ H' + H' \circ \mu)_{(n)} = 0$ and the proof is done by induction.  
	
	If $C$ is a symmetric operad, a nearly identical argument would show that there exists a coderivation $H$ in the usual sense for which $$ \begin{cases} \mu' \circ \Phi - \Phi \circ \mu = 0 \\ F' \circ \Phi - F = \nu \circ H + H \circ \mu.\end{cases} $$
	In particular, $\Phi$ still provides an isomorphism between the two transferred structures $\mu$ and $\mu'$, but $H$ can no longer be interpreted as a homotopy between $\infty$-morphisms.\end{proof}

The proof of \cref{thmD} very similar to the three preceding proofs, and it is obtained by modifying the proof of \cref{thmC} in exactly the same way as \cref{thmB} is obtained by modifying the proof of \cref{thmA}. We omit the argument. 

The notion of homotopy between morphisms of coalgebras over a symmetric cooperad is a bit more complicated to define. There are three possible definitions, all of which give rise to equivalent notions, but not obviously so; see \cite{dotsenkoponcin}. None of the possible definitions make sense in general unless the ground ring $R$ contains $\mathbf Q$, and it is not clear to me whether the above argument could be modified to produce a homotopy in this symmetric sense, for any of the definitions.

\section*{Acknowledgements}
The author gratefully acknowledges support by ERC-2017-STG 759082 and a Wallenberg Academy Fellowship.


\end{document}